\newtheorem{theorem}{Theorem}
\newtheorem*{maintheorem}{Main Theorem}
\newtheorem{lemma}[theorem]{Lemma}
\newtheorem{proposition}[theorem]{Proposition}
\newtheorem{corollary}[theorem]{Corollary}
\newtheorem{algorithm}[theorem]{Algorithm}
\newtheorem{procedure}[theorem]{Procedure}
\numberwithin{equation}{section}
\numberwithin{theorem}{section}
\newcommand{\abs}[1]{\vert{#1}\vert}
\newcommand{\set}[1]{\left\{#1\right\}}
\newcommand{\arrowsv}[0]{\overset{v}{\rightarrow}}
\newcommand{\arrowse}[0]{\overset{e}{\rightarrow}}
\newcommand{\mH}[0]{\mathcal{H}}
\DeclareMathOperator{\V}{V}
\DeclareMathOperator{\E}{E}
\begin{document}
	
	
\title{New bounds on the vertex Folkman number $F_v(2, 2, 2, 3; 4)$}

\author{Aleksandar Bikov}



\subjclass[2010]{Primary 05C35}

\keywords{Folkman number, Ramsey number, clique number, independence number}

\dedicatory{}

\begin{abstract}
For a graph $G$ the expression $G \overset{v}{\rightarrow} (a_1, ..., a_s)$ means that for every coloring of the vertices of $G$ in $s$ colors there exists $i \in \set{1, ..., s}$ such that there is a monochromatic $a_i$-clique of color $i$. The vertex Folkman number $F_v(a_1, ..., a_s; q)$ is defined as
$$F_v(a_1, ..., a_s; q) = \min\{\vert\V(G)\vert : G \overset{v}{\rightarrow} (a_1, ..., a_s) \mbox{ and } K_q \not\subseteq G\}.$$
In this paper we improve the known bounds on the number $F_v(2, 2, 2, 3; 4)$ by proving with the help of a computer that $20 \leq F_v(2, 2, 2, 3; 4) \leq 22$.

\end{abstract}

\maketitle


\section{Introduction}

Only finite, non-oriented graphs without loops and multiple edges are considered in this paper. The vertex set and the edge set of a graph $G$ are denoted by $\V(G)$ and $\E(G)$ respectively. $K_n$ stands for a complete graph or a clique on $n$ vertices, $\omega(G)$ for the clique number of $G$, and $\alpha(G)$ for the independence number of $G$. For two given graphs $G_1$ and $G_2$ we denote by $G_1+G_2$ the graph $G$ obtained by making every vertex of $G_1$ adjacent to every vertex of $G_2$. All undefined terms can be found in \cite{Wes01}.

The Ramsey number $R(p, q)$ is defined as the least positive integer $n$ such that every graph on $n$ vertices contains either a $p$-clique or an independent set of size $q$. More information on the Ramsey numbers used in this paper can be found in \cite{Rad14}.

Let $a_1, ..., a_s$ be positive integers. The expression $G \overset{v}{\rightarrow} (a_1, ..., a_s)$ means that for every coloring of $\V(G)$ in $s$ colors ($s$-coloring) there exists $i \in \set{1, ..., s}$ such that there is a monochromatic $a_i$-clique of color $i$. 

Define:

$\mH_v(a_1, ..., a_s; q) = \set{ G : G \overset{v}{\rightarrow} (a_1, ..., a_s) \mbox{ and } \omega(G) < q }.$

$\mH_v(a_1, ..., a_s; q; n) = \set{ G : G \in \mH_v(a_1, ..., a_s; q) \mbox{ and } \abs{\V(G)} = n }.$

The vertex Folkman number $F_v(a_1, ..., a_s; q)$ is defined as

$F_v(a_1, ..., a_s; q) = \min\set{\abs{\V(G)} : G \in \mH_v(a_1, ..., a_s; q)}$.

Folkman proves in \cite{Fol70} that
\begin{equation}
\label{equation: F_v(a_1, ..., a_s; q) exists}
F_v(a_1, ..., a_s; q) \mbox{ exists } \Leftrightarrow q > \max\set{a_1, ..., a_s}.
\end{equation}
Obviously $F_v(a_1, ..., a_s; q)$ is a symmetric function of $a_1, ..., a_s$, and if $a_i = 1$, then
\begin{equation*}
F_v(a_1, ..., a_s; q) = F_v(a_1, ..., a_{i-1}, a_{i+1}, ..., a_s; q).
\end{equation*}
Therefore, it is enough to consider only such Folkman numbers $F_v(a_1, ..., a_s; q)$ for which $2 \leq a_1 \leq ... \leq a_s$. In the case $a_r = 2$, for convenience, instead of $G \overset{v}{\rightarrow} (\underbrace{2, ..., 2}_r, a_{r + 1}, ..., a_s)$ we can write $G \overset{v}{\rightarrow} (2_r, a_{r + 1}, ..., a_s)$.

In \cite{LU96} Luczak and Urba{\'n}ski defined for arbitrary positive integers $a_1, ..., a_s$:
\begin{equation*}
\label{equation: m and p}
m(a_1, ..., a_s) = m = \sum\limits_{i=1}^s (a_i - 1) + 1 \quad \mbox{ and } \quad p = \max\set{a_1, ..., a_s}.
\end{equation*}

If $q \geq m + 1$, then $F_v(a_1, ..., a_s; q) = m$, since $K_m \overset{v}{\rightarrow} (a_1, ..., a_s)$ and $K_{m - 1} \overset{v}{\nrightarrow} (a_1, ..., a_s)$.

If $q = m$, by (\ref{equation: F_v(a_1, ..., a_s; q) exists}), $F_v(a_1, ..., a_s; m)$ exists if and only if $m \geq p + 1$. If $m \geq p + 1$, then $F_v(a_1, ..., a_s; m) = m + p$ and $K_{m - p - 1} + \overline{C}_{2p + 1}$ is the only $(m + p)$-vertex graph in $\mH_v(a_1, ..., a_s; m)$, \cite{LU96}, \cite{LRU01}.

In the case $q = m - 1$, thanks to the contributions of different authors, the exact values of all the numbers $F_v(a_1, ..., a_s; m - 1)$ for which $p \leq 6$ are known. For more information about these numbers see \cite{BN15a} and \cite{BN15b}.

Not much is known about the numbers $F_v(a_1, ..., a_s; q)$ when $q \leq m - 2$. In the case $q = m - 2$ we know all the numbers where $p = 2$, i.e. all the numbers of the form $F_v(2_r; r - 1)$. Nenov proved in \cite{Nen83} that $F_v(2_r; r - 1) = r + 7$ if $r \geq 8$, and later in \cite{Nen07} he proved that the same equality holds when $r \geq 6$. Other proof of the same equality was given in \cite{Nen09}. By a computer aided research, Jensen and Royle \cite{JR95} obtained the result $F_v(2_4; 3) = 22$. The last remaining number of this form is $F_v(2_5; 4) = 16$. The upper bound was proved by Nenov in \cite{Nen07} and the lower bound was proved by Lathrop and Radziszowski in \cite{LR11} with the help of a computer. 

The exact values of the numbers $F_v(a_1, ..., a_s; m - 2)$ for which $p \geq 3$ are unknown and no good general bounds have been obtained. According to (\ref{equation: F_v(a_1, ..., a_s; q) exists}), $F_v(a_1, ..., a_s; m - 2)$ exists if and only if $m \geq p + 3$. If $p = 3$, it is easy to see that in the border case $m = 6$ there are only two numbers of the form $F_v(a_1, ..., a_s; m - 2)$, namely $F_v(2, 2, 2, 3; 4)$ and $F_v(2, 3, 3; 4)$. Since $G \arrowsv (2, 3, 3) \Rightarrow G \arrowsv (2, 2, 2, 3)$, it follows that
\begin{equation*}
\label{equation: F_v(2, 2, 2, 3; 4) leq F_v(2, 3, 3; 4)}
F_v(2, 2, 2, 3; 4) \leq F_v(2, 3, 3; 4).
\end{equation*}
Computing and obtaining bounds on the numbers $F_v(2, 2, 2, 3; 4)$ and $F_v(2, 3, 3; 4)$ has an important role in relation to computing and obtaining bounds on the other numbers of the form $F_v(a_1, ..., a_s; m - 2)$ for which $p = 3$. For example, it is easy to prove that if $G \arrowsv (a_1, .., a_s)$, then $K_t + G \arrowsv (2_t, a_1, ..., a_s)$, and therefore
\begin{equation}
\label{equation: F_v(2_r, 3; r + 1) leq F_v(2, 2, 2, 3; 4) + r - 3}
F_v(2_r, 3; r + 1) \leq F_v(2, 2, 2, 3; 4) + r - 3, r \geq 3.
\end{equation}
Also, according to Theorem 4.5 and Theorem 7.2 ($m_0 = 6, p = 3, q = 4$) from \cite{BN15a}
\begin{equation}
\label{equation: F_v(a_1, ..., a_s; m - 2) leq F_v(2, 3, 3; 4) + m - 6}
F_v(a_1, ..., a_s; m - 2) \leq F_v(2, 3, 3; 4) + m - 6, \mbox{ if } m \geq 6 \mbox{ and } \max\set{a_1, ..., a_s} = 3.
\end{equation}
Shao, Xu and Luo \cite{SXL09} proved in 2009 that
\begin{equation*}
18 \leq F_v(2, 2, 2, 3; 4) \leq F_v(2, 3, 3; 4) \leq 30.
\end{equation*}
In 2011 Shao, Liang, He and Xu \cite{SLHX11} raised the lower bound on $F_v(2, 3, 3; 4)$ to 19. In 2016 Nenov and the current author \cite{BN16} obtained the bounds
\begin{equation}
\label{equation: 20 leq F_v(2, 3, 3; 4) leq 24}
20 \leq F_v(2, 3, 3; 4) \leq 24.
\end{equation}
In this paper, we improve the bounds on the number $F_v(2, 2, 2, 3; 4)$ by proving the following
\begin{maintheorem}
\label{maintheorem: 20 leq F_v(2, 2, 2, 3; 4) leq 22}
$20 \leq F_v(2, 2, 2, 3; 4) \leq 22$.
\end{maintheorem}
We prove the lower bound from the Main Theorem in Section 2 and the upper bound in Section 3. Substituting $F_v(2, 2, 2, 3; 4)$ with its new upper bound in (\ref{equation: F_v(2_r, 3; r + 1) leq F_v(2, 2, 2, 3; 4) + r - 3}) gives the following
\begin{corollary}
\label{corollary: F_v(2_r, 3; r + 1) leq r + 19}
$F_v(2_r, 3; r + 1) \leq r + 19, r \geq 3$.
\end{corollary}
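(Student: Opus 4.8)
The plan is to derive the corollary as an immediate consequence of the Main Theorem together with inequality (\ref{equation: F_v(2_r, 3; r + 1) leq F_v(2, 2, 2, 3; 4) + r - 3}), so that no new combinatorial work beyond arithmetic is required. First I would invoke the upper bound of the Main Theorem, namely $F_v(2, 2, 2, 3; 4) \leq 22$, and substitute it directly into (\ref{equation: F_v(2_r, 3; r + 1) leq F_v(2, 2, 2, 3; 4) + r - 3}). This gives, for every $r \geq 3$,
\begin{equation*}
F_v(2_r, 3; r + 1) \leq F_v(2, 2, 2, 3; 4) + r - 3 \leq 22 + r - 3 = r + 19,
\end{equation*}
which is exactly the asserted bound.

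To keep the reasoning self-contained, I would also recall the short justification of (\ref{equation: F_v(2_r, 3; r + 1) leq F_v(2, 2, 2, 3; 4) + r - 3}) via the join construction noted in the introduction: if $G \arrowsv (a_1, \ldots, a_s)$, then $K_t + G \arrowsv (2_t, a_1, \ldots, a_s)$, since in any coloring of $\V(K_t + G)$ either two vertices of the complete graph $K_t$ receive the same one of the first $t$ colors and form a monochromatic $K_2$, or the coloring restricted to $G$ forces a monochromatic $a_i$-clique. Taking $t = r - 3$ and $(a_1, a_2, a_3, a_4) = (2, 2, 2, 3)$ converts $(2_t, 2, 2, 2, 3)$ into $(2_r, 3)$. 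The accompanying clique-number check is the only point deserving care: if $G \in \mH_v(2, 2, 2, 3; 4)$ then $\omega(G) \leq 3$, so $\omega(K_{r-3} + G) = (r - 3) + \omega(G) \leq r < r + 1$, whence $K_{r-3} + G \in \mH_v(2_r, 3; r + 1)$. Choosing $G$ to be a $22$-vertex witness of the upper bound therefore yields a graph in $\mH_v(2_r, 3; r + 1)$ on $(r - 3) + 22 = r + 19$ vertices.

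I would emphasize that there is no genuine obstacle at this stage: all the difficulty is already absorbed into the Main Theorem, whose upper bound is established separately in Section 3 by exhibiting an explicit $22$-vertex graph in $\mH_v(2, 2, 2, 3; 4)$. The only routine verifications are that $r - 3 \geq 0$ for $r \geq 3$, so that $K_{r-3}$ is well defined (with the degenerate case $r = 3$ reducing the statement to the Main Theorem itself), and that the join preserves membership in the relevant class $\mH_v$; both are immediate.
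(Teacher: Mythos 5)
Your core derivation is exactly the paper's proof: the corollary is obtained by substituting the Main Theorem's upper bound $F_v(2, 2, 2, 3; 4) \leq 22$ into inequality (\ref{equation: F_v(2_r, 3; r + 1) leq F_v(2, 2, 2, 3; 4) + r - 3}), giving $F_v(2_r, 3; r + 1) \leq 22 + r - 3 = r + 19$ for $r \geq 3$. That step, together with your clique-number check $\omega(K_{r-3} + G) = (r-3) + \omega(G) \leq r$, the vertex count $(r-3) + 22 = r + 19$, and the degenerate case $r = 3$, is correct and complete, since (\ref{equation: F_v(2_r, 3; r + 1) leq F_v(2, 2, 2, 3; 4) + r - 3}) is already established in the paper.

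However, the ``self-contained'' justification you add for the join fact --- that $G \arrowsv (a_1, \ldots, a_s)$ implies $K_t + G \arrowsv (2_t, a_1, \ldots, a_s)$ --- is not a proof as stated. Your dichotomy (either two vertices of $K_t$ share one of the first $t$ colors, or the coloring restricted to $G$ forces a monochromatic $a_i$-clique) is not exhaustive: a coloring may assign first-$t$ colors to vertices of $G$ and last-$s$ colors to vertices of $K_t$, in which case the restriction to $G$ is not an $s$-coloring in the last $s$ colors, so the hypothesis on $G$ says nothing about it. Concretely, take $t = 1$, $a_1 = 3$, $G = K_3$, color the apex with the clique color and one vertex of $G$ with the $K_2$ color: neither branch of your dichotomy applies, and the monochromatic triangle that does exist passes through the apex, invisible to the restriction. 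The standard repair: assume no monochromatic $K_2$ occurs in the first $t$ colors; then any first-$t$ color used on a vertex of $K_t$ cannot appear on $G$ at all, so the first-$t$ color classes inside $G$ are independent sets whose number is at most the number of vertices of $K_t$ carrying last-$s$ colors. Pair each such class with a distinct such vertex of $K_t$ and recolor the class with that vertex's color; applying $G \arrowsv (a_1, \ldots, a_s)$ to the resulting $s$-coloring of $G$ yields a monochromatic $a_i$-clique, and since each recolored class is independent, the clique meets each in at most one vertex, which can be swapped for the paired vertex of $K_t$ (adjacent to everything) to give a monochromatic $a_i$-clique in the original coloring. Alternatively, simply cite (\ref{equation: F_v(2_r, 3; r + 1) leq F_v(2, 2, 2, 3; 4) + r - 3}) as the paper does; the corollary itself is unaffected.
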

Let us also note the following corollary from  (\ref{equation: 20 leq F_v(2, 3, 3; 4) leq 24}) and (\ref{equation: F_v(a_1, ..., a_s; m - 2) leq F_v(2, 3, 3; 4) + m - 6}), which gives a general upper bound on the numbers $F_v(a_1, ..., a_s; m - 2)$ for which $p = 3$
\begin{corollary}
\label{corollary: F_v(a_1, ..., a_s; m - 2) leq m + 18}
$F_v(a_1, ..., a_s; m - 2) \leq m + 18, \mbox{ if } m \geq 6 \mbox{ and } \max\set{a_1, ..., a_s} = 3$.
\end{corollary}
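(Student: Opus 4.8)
The plan is to obtain this bound as an immediate consequence of the two displayed inequalities named in the statement, so that no fresh combinatorial construction is required. First I would apply (\ref{equation: F_v(a_1, ..., a_s; m - 2) leq F_v(2, 3, 3; 4) + m - 6}), which already delivers $F_v(a_1, ..., a_s; m - 2) \leq F_v(2, 3, 3; 4) + m - 6$ under precisely the hypotheses $m \geq 6$ and $\max\set{a_1, ..., a_s} = 3$ assumed in the corollary. Since these hypotheses coincide verbatim with those of (\ref{equation: F_v(a_1, ..., a_s; m - 2) leq F_v(2, 3, 3; 4) + m - 6}), there is nothing to verify about the range of validity and no case distinction is needed.

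The second and final step is to bound the term $F_v(2, 3, 3; 4)$ on the right-hand side from above. Using the upper bound $F_v(2, 3, 3; 4) \leq 24$ recorded in (\ref{equation: 20 leq F_v(2, 3, 3; 4) leq 24}), I would substitute into the previous inequality to obtain $F_v(a_1, ..., a_s; m - 2) \leq 24 + m - 6 = m + 18$, which is exactly the asserted bound. Note that the companion lower bound $20 \leq F_v(2, 3, 3; 4)$ from (\ref{equation: 20 leq F_v(2, 3, 3; 4) leq 24}) plays no role here; only the upper bound is used.

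Because the statement is a one-line arithmetic consequence of two results established earlier in the paper, I do not anticipate any genuine obstacle. The only point worth a remark is that the bound is non-vacuous exactly on the stated range: by (\ref{equation: F_v(a_1, ..., a_s; q) exists}) the number $F_v(a_1, ..., a_s; m - 2)$ exists if and only if $m \geq p + 3$, which for $p = \max\set{a_1, ..., a_s} = 3$ is precisely the hypothesis $m \geq 6$. Hence the left-hand side is defined exactly when the corollary asserts the inequality, and the proof is complete once these substitutions are carried out.
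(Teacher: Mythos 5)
Your proof is correct and matches the paper's own derivation exactly: the corollary is stated in the paper as an immediate consequence of combining the bound $F_v(a_1, ..., a_s; m - 2) \leq F_v(2, 3, 3; 4) + m - 6$ with the upper bound $F_v(2, 3, 3; 4) \leq 24$, which is precisely your two-step substitution. Your closing remark on the existence range $m \geq p + 3 = 6$ is a nice addition, though the paper does not spell it out.
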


\section{Proof of the lower bound $F_v(2, 2, 2, 3; 4) \geq 20$}

We will prove with the help of a computer that $\mH_v(2, 2, 2, 3; 4; 19) = \emptyset$. Even for a very powerful machine it is practically impossible to accomplish this task by an exhaustive search through the set of all 19-vertex graphs. Instead, the computational approach is based on the following useful fact, which is easy to prove (see Proposition 2.2 from \cite{BN15b}):

\begin{proposition}
\label{proposition: H}
Let $G \in \mH_v(a_1, ..., a_s; q; n)$, $A \subseteq \V(G)$ be an independent set of vertices of $G$ and $H = G - A$. Then, $H \in \mH_v(a_1 - 1, a_2, ..., a_s; q; n - |A|)$.
\end{proposition}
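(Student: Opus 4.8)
The plan is to prove this by a direct vertex-coloring argument. Suppose $G \in \mH_v(a_1, \dots, a_s; q; n)$, let $A \subseteq \V(G)$ be independent, and set $H = G - A$. I must show two things: that $\omega(H) < q$, and that $H \arrowsv (a_1 - 1, a_2, \dots, a_s)$; the cardinality statement $\abs{\V(H)} = n - \abs{A}$ is immediate, and $\omega(H) \leq \omega(G) < q$ since $H$ is an induced subgraph of $G$.

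The heart of the matter is the arrowing relation. First I would take an arbitrary $s$-coloring of $\V(H)$, say using color classes $V_1, \dots, V_s$, and extend it to a coloring of all of $\V(G)$ by assigning every vertex of $A$ to color $1$; that is, I form the color classes $V_1 \cup A, V_2, \dots, V_s$ on $\V(G)$. Since $G \arrowsv (a_1, \dots, a_s)$, this coloring of $G$ must produce a monochromatic clique: either an $a_1$-clique inside $V_1 \cup A$, or an $a_j$-clique inside $V_j$ for some $j \geq 2$. In the latter case that same clique already lies in $H$ and witnesses the arrowing for $H$ directly. So the only work is the first case.

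The key step, and the one requiring care, is analyzing a monochromatic $a_1$-clique $Q \subseteq V_1 \cup A$. Because $A$ is independent in $G$, the clique $Q$ can contain at most one vertex of $A$: any two vertices of $A$ are non-adjacent and hence cannot both belong to a clique. Therefore $Q$ meets $V_1$ in at least $a_1 - 1$ vertices, and these $a_1 - 1$ vertices form a clique of color $1$ lying entirely in $H$. This gives a monochromatic $(a_1 - 1)$-clique of color $1$ in the coloring of $H$, which is exactly what is needed to witness $H \arrowsv (a_1 - 1, a_2, \dots, a_s)$.

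Since the starting $s$-coloring of $H$ was arbitrary, in every $s$-coloring of $\V(H)$ we obtain either an $(a_1-1)$-clique of color $1$ or an $a_j$-clique of color $j$ for some $j \geq 2$, so $H \arrowsv (a_1 - 1, a_2, \dots, a_s)$, completing the proof. I do not expect a serious obstacle here; the single subtle point is the observation that independence of $A$ forces $\abs{Q \cap A} \leq 1$, which is precisely why the parameter $a_1$ drops by exactly one rather than collapsing further.
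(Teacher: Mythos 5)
Your proof is correct and is the expected argument: extend an arbitrary $s$-coloring of $\V(H)$ to $\V(G)$ by placing $A$ in the first color class, and use the independence of $A$ to conclude that any monochromatic $a_1$-clique meets $A$ in at most one vertex, hence leaves an $(a_1-1)$-clique of color $1$ inside $H$. The paper itself gives no proof, deferring to Proposition 2.2 of \cite{BN15b}, where essentially this same coloring-extension argument appears (phrased contrapositively: a coloring of $H$ with no monochromatic cliques of the required sizes would extend to such a coloring of $G$); so your proposal matches the intended proof.
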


According to Proposition \ref{proposition: H}, if $G \arrowsv (2, 3, 3)$ and the graph $H$ is obtained by removing an independent set of vertices from $G$, then $H \arrowsv (3, 3)$. Shao, Liang, He and Xu showed in \cite{SLHX11} that if $G \in \mH_v(2, 3, 3; 4; 18)$, then $G$ can be obtained by adding 4 independent vertices to 111 of the 153 graphs in $\mH_v(3, 3; 4; 14)$, which were obtained in \cite{PRU99}. With the help of a computer, the authors showed that such extension does not produce any graphs in $\mH_v(2, 3, 3; 4; 18)$ and proved the lower bound $F_v(2, 3, 3; 4) \geq 19$. Similarly, they showed that if $G \in \mH_v(2, 2, 2, 3; 4; 18)$, then $G$ can be obtained by adding 4 independent vertices to 12064 of the 12227 graphs in $\mH_v(2, 2, 3; 4; 14)$, which were obtained in \cite{CR06}. Computing the bound $F_v(2, 2, 2, 3; 4) \geq 19$ is harder because of the larger number of graphs that have to be extended.

In \cite{BN16} Nenov and the current author obtained with the help of a computer all 2081214 graphs in $\mH_v(3, 3; 4; 15)$ and proved the lower bound $F_v(2, 3, 3; 4) \geq 20$. In this section we prove the stronger inequality $F_v(2, 2, 2, 3; 4) \geq 20$, which requires much more computational effort. To obtain this result we modify Algorithm 2.4 (Algorithm 2.7 in submitted version) from \cite{BN16}.

We will need the following auxiliary definitions and propositions. The graph $G$ is called a maximal graph in $\mH_v(2_r, 3; 4; n)$ if $\omega(G + e) = 4$ for every $e \in \E(\overline{G})$. We say that $G$ is a $(+K_3)$-graph if $G + e$ contains a new $3$-clique for every $e \in \E(\overline{G})$. Suppose that $G$ is a maximal graph in $\mH_v(2_r, 3; 4; n)$ and $\alpha(G) = s$. Let $A \subseteq \V(G)$ be an independent set of vertices of $G$, $\abs{A} = s$ and $H = G - A$. By Proposition \ref{proposition: H}, we have $H \in \mH_v(2_{r - 1}, 3; 4; n - s)$. From $\omega(G + e) = 4, \forall e \in \E(\overline{G})$ it follows that $H$ is a $(+K_3)$-graph, and obviously, from $\alpha(G) = s$ it follows that $\alpha(H) \leq s$. Thus, we proved the following
\begin{proposition}
\label{proposition: extend input}
Let $G$ be a maximal graph in $\mH_v(2_r, 3; 4; n)$ and $\alpha(G) = s$. Let $A \subseteq \V(G)$ be an independent set of vertices of $G$, $\abs{A} = s$ and $H = G - A$. Then $H$ is a $(+K_3)$ graph in $\mH_v(2_{r - 1}, 3; 4; n - s)$ with independence number not greater than s.
\end{proposition}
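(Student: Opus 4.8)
The plan is to verify separately the three asserted properties of $H$: its membership in $\mH_v(2_{r-1}, 3; 4; n-s)$, the $(+K_3)$ property, and the bound on the independence number. The first is immediate from Proposition \ref{proposition: H}: applying it to $G \in \mH_v(2_r, 3; 4; n)$ with the independent set $A$ gives $H = G - A \in \mH_v(1, 2_{r-1}, 3; 4; n-s)$, and since an entry equal to $1$ may be dropped, this is exactly $\mH_v(2_{r-1}, 3; 4; n-s)$; the count $\abs{\V(H)} = n-s$ is clear because $\abs{A} = s$. The bound on $\alpha$ is equally quick: as $H$ is an induced subgraph of $G$, every independent set of $H$ is an independent set of $G$, so $\alpha(H) \leq \alpha(G) = s$.

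The only part requiring a genuine argument is the $(+K_3)$ property, and this is where the single subtlety lies. I would take an arbitrary non-edge $e = \set{u, v} \in \E(\overline{H})$. Since $H$ is an induced subgraph, $u, v \in \V(G) \setminus A$ and $uv \notin \E(G)$, so $e \in \E(\overline{G})$ as well. Because $G$ is maximal in $\mH_v(2_r, 3; 4; n)$ while $\omega(G) < 4$, adding $e$ produces a new $4$-clique; this clique must use $e$, so it has the form $\set{u, v, w, x}$ with $wx \in \E(G)$ and with $w, x$ both adjacent in $G$ to $u$ and to $v$. The crucial observation is that $A$ is independent, so the edge $wx$ cannot have both endpoints in $A$; hence at least one of $w, x$ — say $w$ — lies in $\V(H)$. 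Then $\set{u, v, w} \subseteq \V(H)$, the edges $uw$ and $vw$ survive in $H$, and together with $e$ they form a new $3$-clique in $H + e$. As $e$ was arbitrary, $H$ is a $(+K_3)$-graph.

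Combining the three observations yields the statement. The whole argument is elementary, so I do not expect a serious obstacle; the only place demanding care is guaranteeing that the triangle witnessing the $(+K_3)$ property avoids $A$, which is precisely where the independence of $A$ is used (to rule out the edge $wx$ lying entirely inside $A$).
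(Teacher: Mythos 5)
Your proposal is correct and follows essentially the same route as the paper: membership in $\mH_v(2_{r-1}, 3; 4; n-s)$ via Proposition \ref{proposition: H} (dropping the entry $1$), the $(+K_3)$ property from the maximality of $G$, and the trivial bound $\alpha(H) \leq \alpha(G) = s$. The paper merely asserts the $(+K_3)$ step in one line, whereas you correctly fill in the detail — that the new $K_4$ created by adding a non-edge of $H$ has at most one vertex in the independent set $A$, leaving a new triangle inside $H + e$.
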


The graph $G$ is called a Sperner graph if $N_G(u) \subseteq N_G(v)$ for some pair of vertices $u, v \in \V(G)$. If $G \in \mH_v(2_r, 3; 4; n)$ is a Sperner graph and $N_G(u) \subseteq N_G(v)$, then $G - u \in \mH_v(2_r, 3; 4; n - 1)$. In the special case when $G$ is a maximal Sperner graph in $\mH_v(2_r, 3; 4; n)$, from $N_G(u) \subseteq N_G(v)$ we derive $N_G(u) = N_G(v)$, and it follows that $G - u$ is a maximal graph in $\mH_v(2_r, 3; 4; n - 1)$. Thus, we proved the following

\begin{proposition}
\label{proposition: maximal Sperner graphs}
Every maximal Sperner graph in $\mH_v(2_r, 3; 4; n)$ is obtained by duplicating a vertex in some of the maximal graphs in $\mH_v(2_r, 3; 4; n - 1)$.
\end{proposition}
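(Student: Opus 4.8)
The plan is to start from a maximal Sperner graph $G \in \mH_v(2_r, 3; 4; n)$, fix a pair $u, v \in \V(G)$ witnessing the Sperner condition $N_G(u) \subseteq N_G(v)$, and show that $G$ is exactly the graph obtained by duplicating the vertex $v$ inside $G - u$; that is, I would identify $G - u$ as the required base graph and the removed vertex $u$ as the duplicate. Two things then have to be checked: that the inclusion $N_G(u) \subseteq N_G(v)$ can be upgraded to the equality $N_G(u) = N_G(v)$ (so that $G$ genuinely is a vertex duplication rather than merely a subneighbourhood relation), and that $G - u$ is itself a maximal graph in $\mH_v(2_r, 3; 4; n-1)$.

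First I would record a basic fact: $u$ and $v$ are non-adjacent, since $u \sim v$ would force $v \in N_G(u) \subseteq N_G(v)$, i.e. a loop at $v$. To see that $G - u \arrowsv (2_r, 3)$, I would take any coloring of $\V(G - u)$ in $r + 1$ colors, extend it to $G$ by assigning $u$ the color of $v$, and use $G \arrowsv (2_r, 3)$ to produce the required monochromatic clique; because $N_G(u) \subseteq N_G(v)$ and $u, v$ receive the same color, any such clique through $u$ can have $u$ replaced by $v$ (which is distinct from the clique's other vertices, as $u \not\sim v$), yielding a monochromatic clique avoiding $u$. Together with $\omega(G - u) \leq \omega(G) < 4$ this gives $G - u \in \mH_v(2_r, 3; 4; n-1)$.

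Next I would upgrade the inclusion to an equality using maximality. Suppose, for contradiction, that some $w \in N_G(v) \setminus N_G(u)$ exists. Then $uw \in \E(\overline{G})$, so maximality forces $\omega(G + uw) = 4$, and the new $4$-clique must contain the edge $uw$; write it as $\set{u, w, x, y}$. Now $x, y \in N_G(u) \subseteq N_G(v)$ and $w \in N_G(v)$, while $w, x, y$ are pairwise adjacent, so $\set{v, w, x, y}$ is already a $4$-clique in $G$ --- contradicting $\omega(G) < 4$. Hence $N_G(u) = N_G(v)$, which is precisely the statement that $G$ arises from $G - u$ by duplicating $v$.

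Finally I would verify that $G - u$ is maximal. For any non-edge $ab$ of $G - u$ (hence of $G$), maximality of $G$ yields a $4$-clique through $ab$ in $G + ab$. If this clique avoids $u$ it already lies in $(G - u) + ab$; if it is $\set{a, b, c, u}$, then $a, b, c \in N_G(u) = N_G(v)$, so substituting $v$ for $u$ gives the $4$-clique $\set{a, b, c, v}$ in $(G - u) + ab$ (with $v$ distinct from $a, b, c$, again because $u \not\sim v$). Since $\omega(G - u) < 4$ bounds $\omega((G - u) + ab) \leq 4$, we obtain equality, so $G - u$ is maximal in $\mH_v(2_r, 3; 4; n-1)$, completing the argument. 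I expect the main obstacle to be the upgrade step $N_G(u) \subseteq N_G(v) \Rightarrow N_G(u) = N_G(v)$, where maximality is essential and one must check carefully in each case that the substituted vertex $v$ is genuinely new in the clique, so that no degenerate or looped configuration is produced.
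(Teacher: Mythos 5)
Your proposal is correct and follows exactly the paper's route: delete the vertex $u$ with $N_G(u) \subseteq N_G(v)$, show $G - u \in \mH_v(2_r, 3; 4; n-1)$ by recoloring/clique-substitution, use maximality to upgrade the inclusion to $N_G(u) = N_G(v)$, and conclude that $G - u$ is maximal. The paper states these three steps without detail, so your write-up is simply a fleshed-out version of the same argument (and your clique-substitution verifications, including the checks that $v$ is genuinely new in each substituted clique, are sound).
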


All maximal non-Sperner graphs with independence number $s$ in $\mH_v(2_r, 3; 4; n)$ are obtained very efficiently with the following algorithm which is a slightly modified variant of Algorithm 2.4 (Algorithm 2.7 in the submitted version) from \cite{BN16} (the set $\mathcal{L}(n; p; s)$ is replaced with the set $\mH_v(2_r, 3; 4; n)$ and the last check $K_p + G \arrowse (3, 3)$ is replaced with $G \arrowsv (2_r, 3)$). As noted in \cite{BN16}, this algorithm can be used not only to prove $F_e(3, 3; 4) \geq 20$, but also to obtain other results. At the end of \cite{BN16}, one such example is given by proving $F_v(2, 3, 3; 4) \geq 20$. In this paper we use the following modification of this algorithm to obtain another new result ($F_v(2, 2, 2, 3; 4) \geq 20$).

\begin{algorithm}
\label{algorithm: extend}
Finding all maximal non-Sperner graphs with independence number $s$ in $\mH_v(2_r, 3; 4; n)$, for fixed $n$, $r$ and $s$.

\emph{1.} The input of the algorithm is the set $\mathcal{A}$ of all $(+K_3)$-graphs in $\mH_v(2_{r - 1}, 3; 4; n - s)$ with independence number not greater than $s$. The obtained graphs will be output in the set $\mathcal{B}$, let $\mathcal{B} = \emptyset$.

\emph{2.} For each graph $H \in \mathcal{A}$:

\emph{2.1.} Find the family $\mathcal{M}(H) = \set{M_1, ..., M_t}$ of all maximal $K_3$-free subsets of $\V(H)$.

\emph{2.2.} Find all $s$-element subsets $N = \set{M_{i_1}, M_{i_2}, ..., M_{i_s}}$ of $\mathcal{M}(H)$, which fulfill the conditions:

(a) $M_{i_j} \neq N_H(v)$ for every $v \in \V(H)$ and for every $M_{i_j} \in N$.

(b) $K_2 \subseteq H[M_{i_j} \cap M_{i_k}]$ for every $M_{i_j}, M_{i_k} \in N$.

(c) $\alpha(H - \bigcup_{M_{i_j} \in N'} M_{i_j}) \leq s - \abs{N'}$ for every $N' \subseteq N$.

\emph{2.3.} For each $s$-element subset $N = \set{M_{i_1}, M_{i_2}, ..., M_{i_s}}$ of $\mathcal{M}(H)$ found in step 2.2 construct the graph $G = G(N)$ by adding new independent vertices $v_1, v_2, ..., v_s$ to $\V(H)$ such that $N_G(v_j) = M_{i_j}, j = 1, ..., s$. If $G$ is not a Sperner graph and $\omega(G + e) = 4, \forall e \in \E(\overline{G})$, then add $G$ to $\mathcal{B}$.

\emph{3.} Remove the isomorph copies of graphs from $\mathcal{B}$.

\emph{4.} Remove from $\mathcal{B}$ all graphs $G$ for which $G \not\arrowsv (2_r, 3)$.
\end{algorithm}

We will prove the correctness of Algorithm \ref{algorithm: extend} with the help of the following
\begin{lemma}
\label{lemma: extend}
\cite{BN16}
After the execution of step 2.3 of Algorithm \ref{algorithm: extend}, the obtained set $\mathcal{B}$ coincides with the set of all maximal $K_4$-free non-Sperner graphs $G$ with $\alpha(G) = s$ which have an independent set of vertices $A \subseteq \V(G), \abs{A} = s$ such that $G - A \in \mathcal{A}$.
\end{lemma}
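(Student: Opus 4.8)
The plan is to prove the claimed set equality by two inclusions, organized around the correspondence sending an admissible family $N = \set{M_{i_1}, \dots, M_{i_s}}$ (together with its host graph $H$) to the pair $(G, A)$ with $A = \set{v_1, \dots, v_s}$ the vertices added in step 2.3 and $G = G(N)$, and conversely sending a graph $G$ with a distinguished independent set $A$ of size $s$ to the family $\set{N_G(v) : v \in A}$. The single fact I would invoke throughout is the defining property of a maximal $K_4$-free graph: for every non-edge $uw \in \E(\overline{G})$ the graph $G + uw$ contains a $K_4$, and since $G$ is itself $K_4$-free this $K_4$ must use the edge $uw$, so there exist $x, y \in N_G(u) \cap N_G(w)$ with $xy \in \E(G)$.

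For the inclusion of $\mathcal{B}$ into the described set, I would take a graph $G = G(N)$ added in step 2.3 and check each required property. By construction $A = \set{v_1, \dots, v_s}$ is independent of size $s$ and $G - A = H \in \mathcal{A}$, while the non-Sperner property and $\omega(G + e) = 4$ for all $e \in \E(\overline{G})$ are exactly the tests imposed in step 2.3. It then remains to see that $G$ is $K_4$-free with $\alpha(G) = s$. For $K_4$-freeness, the $v_j$ are pairwise non-adjacent, so any clique meets $A$ in at most one vertex; a clique through $v_j$ lies in $\set{v_j} \cup M_{i_j}$ and $M_{i_j}$ is $K_3$-free, while a clique inside $H$ has at most three vertices since $H$ is $K_4$-free. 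For $\alpha(G) = s$, the set $A$ gives $\alpha(G) \geq s$; for the reverse bound I would take any independent set $I$, write $I_A = I \cap A$ and $I_H = I \cap \V(H)$, let $N' = \set{M_{i_j} : v_j \in I_A}$ (with $\abs{N'} = \abs{I_A}$, the chosen neighborhoods being distinct), observe that $I_H$ avoids $\bigcup_{M_{i_j} \in N'} M_{i_j}$ because no vertex of $I_H$ is adjacent to a $v_j \in I_A$, and conclude $\abs{I_H} \leq s - \abs{N'}$ from condition (c), so $\abs{I} = \abs{I_A} + \abs{I_H} \leq s$.

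For the reverse inclusion I would begin with a maximal $K_4$-free non-Sperner graph $G$ with $\alpha(G) = s$ together with a fixed independent set $A = \set{v_1, \dots, v_s}$ satisfying $H := G - A \in \mathcal{A}$, and set $M_j := N_G(v_j) \subseteq \V(H)$. Each $M_j$ is $K_3$-free, since a triangle in $M_j$ would complete a $K_4$ with $v_j$; moreover $M_j$ is a maximal $K_3$-free subset of $\V(H)$, for if some $w \in \V(H) \setminus M_j$ kept $M_j \cup \set{w}$ triangle-free, the key fact applied to the non-edge $v_j w$ would produce an edge $xy$ with $x, y \in M_j \cap N_G(w)$, giving a triangle $\set{w, x, y}$ inside $M_j \cup \set{w}$, a contradiction. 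Hence $N = \set{M_1, \dots, M_s} \subseteq \mathcal{M}(H)$, and I would verify the admissibility conditions: (a) would fail only if $M_j = N_H(v)$ for some $v \in \V(H)$, forcing $N_G(v_j) \subseteq N_G(v)$ and contradicting the non-Sperner hypothesis; (b) follows by applying the key fact to each non-edge $v_j v_k$ with $j \neq k$; and (c) follows by the same independent-set bookkeeping as above, now reading $\alpha(G) = s$ as the upper bound. Finally $G(N) = G$ passes the step 2.3 tests, so $G \in \mathcal{B}$.

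The step I expect to require the most care is the bookkeeping behind condition (c) and the exact equality $\alpha(G) = s$: one must check that the passage between independent subsets $I_A \subseteq A$ and subfamilies $N' \subseteq N$ is faithful in both directions, so that condition (c) is genuinely equivalent to $\alpha(G) \leq s$ once $A$ has forced $\alpha(G) \geq s$. By contrast, conditions (a) and (b) and the maximality of each $M_j$ are all immediate consequences of the single observation that adding a non-edge to a maximal $K_4$-free graph creates a $K_4$ through that edge, so once that observation is recorded the remainder is routine.
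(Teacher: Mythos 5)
Your argument is correct, but there is nothing in the paper to compare it against: the paper does not prove this lemma at all. The statement is imported verbatim from \cite{BN16} (hence the citation inside the lemma), and the paper only proves Theorem \ref{theorem: extend} on top of it. So what you have produced is a self-contained verification of a result the paper takes on faith, and it holds up. In the forward inclusion you correctly get $K_4$-freeness of $G(N)$ from the $K_3$-freeness of the sets $M_{i_j}$ together with the $K_4$-freeness of $H$, and you correctly extract $\alpha(G)=s$ from condition (c) via the partition $I = I_A \cup I_H$; in the reverse inclusion the single observation you isolate (a new edge added to a maximal $K_4$-free graph lies in a new $K_4$, hence yields an edge inside the common neighborhood of its endpoints) does indeed deliver maximality of each $M_j = N_G(v_j)$ as a $K_3$-free subset, condition (b), and, combined with non-Sperner-ness, condition (a); condition (c) follows from $\alpha(G)=s$ by the reverse bookkeeping as you say. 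One point you leave implicit in the reverse direction and should state: for $N = \{M_1, \dots, M_s\}$ to be an $s$-element subset of $\mathcal{M}(H)$ (and hence to be found in step 2.2), the sets $M_j$ must be pairwise distinct; this is immediate because $N_G(v_j) = N_G(v_k)$ for $j \neq k$ would make $G$ a Sperner graph, but it is a needed line. With that sentence added, your proof is complete and is the natural argument one would expect the cited source to contain.
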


\begin{theorem}
\label{theorem: extend}
After the execution of Algorithm \ref{algorithm: extend}, the obtained set $\mathcal{B}$ coincides with the set of all maximal non-Sperner graphs with independence number $s$ in $\mH_v(2_r, 3; 4; n)$.
\end{theorem}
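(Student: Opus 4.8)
The plan is to build on Lemma~\ref{lemma: extend}, which already tells us the content of $\mathcal{B}$ after step 2.3: it is exactly the family of all maximal $K_4$-free non-Sperner graphs $G$ with $\alpha(G) = s$ that admit an independent set $A$ of size $s$ with $G - A \in \mathcal{A}$. The gap between this and the theorem's claim is twofold: first, the lemma characterizes $\mathcal{B}$ after step 2.3, whereas the theorem describes $\mathcal{B}$ after step 4, so I must account for what steps 3 and 4 do; second, the lemma says ``$K_4$-free'' where the theorem requires membership in $\mH_v(2_r, 3; 4; n)$, so I must bridge the arrowing condition. My strategy is therefore to prove the two set inclusions between the post-step-4 $\mathcal{B}$ and the target family $\mathcal{T}$ of all maximal non-Sperner graphs in $\mH_v(2_r, 3; 4; n)$.

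For the inclusion $\mathcal{B} \subseteq \mathcal{T}$, I take $G \in \mathcal{B}$ after step 4. By Lemma~\ref{lemma: extend}, $G$ is a maximal $K_4$-free non-Sperner graph with $\alpha(G) = s$ (steps 3 and 4 only delete graphs, so they cannot introduce anything outside the step-2.3 characterization). Since $G$ survives step 4, we have $G \arrowsv (2_r, 3)$, and combined with $\omega(G) < 4$ (i.e. $K_4 \not\subseteq G$, which is the $K_4$-free condition) this gives $G \in \mH_v(2_r, 3; 4; n)$. Maximality in $\mH_v(2_r, 3; 4; n)$ follows because the step-2.3 construction already imposes $\omega(G + e) = 4$ for every $e \in \E(\overline{G})$, which is precisely the definition of a maximal graph given in the text. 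Hence $G \in \mathcal{T}$.

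For the reverse inclusion $\mathcal{T} \subseteq \mathcal{B}$, I take an arbitrary $G \in \mathcal{T}$, that is, a maximal non-Sperner graph in $\mH_v(2_r, 3; 4; n)$. The key observation is that such a $G$ has $\alpha(G) = s$ by hypothesis, and by Proposition~\ref{proposition: extend input} (applied with the independent set $A$ of size $s$ witnessing $\alpha(G) = s$), the graph $H = G - A$ is a $(+K_3)$-graph in $\mH_v(2_{r-1}, 3; 4; n - s)$ with independence number at most $s$; that is, $H \in \mathcal{A}$, the input set of the algorithm. Therefore $G$ is a maximal $K_4$-free non-Sperner graph with $\alpha(G) = s$ admitting an independent $s$-set $A$ with $G - A = H \in \mathcal{A}$, so by Lemma~\ref{lemma: extend} a copy of $G$ is present in $\mathcal{B}$ after step 2.3. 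Finally, $G$ survives step 3 (isomorph removal keeps one representative of each isomorphism class) and step 4 (since $G \in \mH_v(2_r, 3; 4; n)$ means $G \arrowsv (2_r, 3)$, $G$ is not removed). Thus $G \in \mathcal{B}$.

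The main subtlety, rather than a genuine obstacle, is being careful that the definition of ``maximal graph in $\mH_v(2_r, 3; 4; n)$'' (namely $\omega(G + e) = 4$ for all $e \in \E(\overline{G})$) matches exactly the condition enforced in step 2.3, so that maximality transfers cleanly in both directions without any hidden assumption; and that Proposition~\ref{proposition: extend input} really does place $H$ into the input family $\mathcal{A}$ verbatim, which it does since $\mathcal{A}$ is defined precisely as the $(+K_3)$-graphs in $\mH_v(2_{r-1}, 3; 4; n-s)$ with independence number at most $s$. Given these matchups, the theorem reduces to bookkeeping over steps 3 and 4 layered on top of Lemma~\ref{lemma: extend}.
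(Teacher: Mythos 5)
Your proposal is correct and follows essentially the same route as the paper's proof: one direction via Lemma \ref{lemma: extend} plus the survival of step 4 (giving the arrowing property and hence membership in $\mH_v(2_r, 3; 4; n)$), and the converse via Proposition \ref{proposition: extend input} to place $G - A$ in $\mathcal{A}$ and then Lemma \ref{lemma: extend} again. Your version merely spells out more of the bookkeeping (maximality matching, isomorph rejection) that the paper leaves implicit.
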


\begin{proof}
Suppose that, after the execution of Algorithm \ref{algorithm: extend}, $G \in \mathcal{B}$. According to Lemma \ref{lemma: extend}, $G$ is a maximal $K_4$-free non-Sperner graph with independence number $s$. From step 4 it follows that $G \in \mH_v(2_r, 3; 4; n)$. 
	
Conversely, let $G$ be an arbitrary maximal non-Sperner graph with independence number $s$ in $\mH_v(2_r, 3; 4; n)$. Let $A \subseteq \V(G)$ be an independent set of vertices of $G$, $\abs{A} = s$ and $H = G - A$. According to Proposition \ref{proposition: extend input}, $H \in \mathcal{A}$, and from Lemma \ref{lemma: extend} it follows that, after the execution of step 2.3, $G \in \mathcal{B}$. Clearly, after step 4, $G$ remains in $\mathcal{B}$.
\end{proof}

We shall note the important role of the \emph{nauty} programs \cite{MP13} in this work. We use them for fast generation of non-isomorphic graphs and for graph isomorph rejection.

We performed various tests to check the correctness of the implementation of Algorithm \ref{algorithm: extend}. For example, we used the algorithm to reproduce all 12227 graphs in $\mH_v(2, 2, 3; 4; 14)$, which were obtained in \cite{CR06}. A lot of different tests of step 2 of the algorithm were performed in \cite{BN16}. All computations were completed in about a week on a personal computer. 

\subsection*{Proof of the bound $F_v(2, 2, 2, 3; 4) \geq 19$}

It is enough to prove that $\mH_v(2, 2, 2, 3; 4; 18) = \emptyset$. From $R(4, 4) = 18$ it follows that there are no graphs with independence number less than 4 in $\mH_v(2, 2, 2, 3; 4; 18)$, and from Proposition \ref{proposition: H} and $F_v(2, 2, 3; 4) = 14$ \cite{CR06} we derive that there are no graphs with independence number more than 4 in this set. Since $F_v(2, 2, 2, 3; 4) \geq 18$, there are no Sperner graphs in $\mH_v(2, 2, 2, 3; 4; 18)$. It remains to be proved that there are no non-Sperner graphs with independence number 4 in $\mH_v(2, 2, 2, 3; 4; 18)$. We apply Algorithm \ref{algorithm: extend} ($n = 18, r = 3, s = 4$). In step 1, $\mathcal{A}$ is the set of all 11844 $(+K_3)$-graphs with independence number less than 5 in $\mH_v(2, 2, 3; 4; 14)$. Let us remind, that all 12227 graphs in $\mH_v(2, 2, 3; 4; 14)$ were obtained in \cite{CR06}. After the execution of step 3, 130923 graphs remain in the set $\mathcal{B}$. None of these graphs belong to $\mH_v(2, 2, 2, 3; 4)$, and therefore after step 4 we have $\mathcal{B} = \emptyset$. Now, from Theorem \ref{theorem: extend} we conclude that there are no non-Sperner graphs with independence number 4 in $\mH_v(2, 2, 2, 3; 4; 18)$, which finishes the proof.

\subsection*{Proof of the bound $F_v(2, 2, 2, 3; 4) \geq 20$}

It is enough to prove that $\mH_v(2, 2, 2, 3; 4; 19) = \emptyset$. Again, from $R(4, 4) = 18$ it follows that there are no graphs with independence number less than 4 in $\mH_v(2, 2, 2, 3; 4; 18)$, and from Proposition \ref{proposition: H} and $F_v(2, 2, 3; 4) = 14$ \cite{CR06} we derive that there are no graphs with independence number more than 5 in this set. Since $F_v(2, 2, 2, 3; 4) \geq 19$, there are no Sperner graphs in $\mH_v(2, 2, 2, 3; 4; 19)$.

We use Algorithm \ref{algorithm: extend} ($n = 19, r = 3, s = 5$) to prove that there are no non-Sperner graphs with independence number 5 in $\mH_v(2, 2, 2, 3; 4; 19)$. In step 1, out of the 12227 graphs in $\mH_v(2, 2, 3; 4; 14)$ from \cite{CR06}, there are 11989 $(+K_3)$-graphs with independence number less than 6. After the execution of step 3, 27433657 graphs remain in the set $\mathcal{B}$. Since none of these graphs are in $\mH_v(2, 2, 2, 3; 4)$, after step 4 we have $\mathcal{B} = \emptyset$. Now, from Theorem \ref{theorem: extend} we conclude that there are no non-Sperner graphs with independence number 5 in $\mH_v(2, 2, 2, 3; 4; 19)$.

It remains to be proved that there are no non-Sperner graphs with independence number 4 in $\mH_v(2, 2, 2, 3; 4; 19)$. By an exhaustive computer search we find all 3423631 $(+K_3)$-graphs in $\mH_v(2, 3; 4; 11)$ with independence number not greater than 4. We apply Algorithm \ref{algorithm: extend} ($n = 15, r = 2, s = 4$) to obtain all 165614 maximal non-Sperner graphs in $\mH_v(2, 2, 3; 4; 15)$ with independence number 4. According to Proposition \ref{proposition: maximal Sperner graphs}, all maximal Sperner graphs in $\mH_v(2, 2, 3; 4; 15)$ are obtained by duplicating a vertex in the maximal graphs in $\mH_v(2, 2, 3; 4; 14)$. In this way, we find all 4603 maximal Sperner graphs with independence number 4 in $\mH_v(2, 2, 3; 4; 15)$. There are exactly 640 15-vertex $K_4$-free graphs with independence number less than 4, which are available on \cite{McK_r}. Among them, 35 are maximal graphs in $\mH_v(2, 2, 3; 4; 15)$. Thus, we found all 170252 maximal graphs in $\mH_v(2, 2, 3; 4; 15)$ with independence number less than 5. By removing edges from these graphs, we find all 68783156 $(+K_3)$-graphs in $\mH_v(2, 2, 3; 4; 15)$ with independence number less than 5. Now, we apply Algorithm \ref{algorithm: extend} ($n = 19, r = 3, s = 4$) to obtain all maximal non-Sperner graphs with independence number 4 in $\mH_v(2, 2, 2, 3; 4; 19)$. After the execution of step 3, 347307340 graphs remain in the set $\mathcal{B}$. Similarly to the previous case, none of these graphs are in $\mH_v(2, 2, 2, 3; 4)$, and after step 4 we have $\mathcal{B} = \emptyset$. By Theorem \ref{theorem: extend}, it follows that there are no non-Sperner graphs with independence number 4 in $\mH_v(2, 2, 2, 3; 4; 19)$, which finishes the proof.

\section{Proof of the upper bound $F_v(2, 2, 2, 3; 4) \leq 22$}

We need to construct a 22-vertex graph in $\mH_v(2, 2, 2, 3; 4)$. First, we find a large number of 24-vertex and 23-vertex maximal graphs in $\mH_v(2, 2, 2, 3; 4)$.

All 352366 24-vertex graphs $G$ with $\omega(G) \leq 3$ and $\alpha(G) \leq 4$ were found by McKay, Radziszowski and Angeltveit (see \cite{McK_r}). Among these graphs there are 3903 maximal graphs in $\mH_v(2, 2, 2, 3; 4; 24)$.

By removing one vertex from the obtained 24-vertex maximal graphs we find 6 graphs in $\mH_v(2, 2, 2, 3; 4; 23)$. Let us note that the removal of two vertices from the 24-vertex maximal graphs does not produce any graphs in $\mH_v(2, 2, 2, 3; 4; 22)$. Out of the 6 obtained graphs in $\mH_v(2, 2, 2, 3; 4; 23)$, 5 are maximal. One more maximal graph is obtained by adding one edge to the 6th graph, thus we have 6 maximal graphs in $\mH_v(2, 2, 2, 3; 4; 23)$. We find 192 more maximal graphs in $\mH_v(2, 2, 2, 3; 4; 23)$ by applying Procedure \ref{procedure: populate}, which is defined below. The same procedure was used in \cite{BN15a} to find the graph $\Gamma_2 \in \mH_v(4, 5; 7; 17)$.

By removing one vertex from the obtained 23-vertex maximal graphs we find the graph $\Gamma \in \mH_v(2, 2, 2, 3; 4; 22)$, which is displayed on Figure \ref{figure: H(2, 2, 2, 3; 4; 22)}. By removing one edge from the graph $\Gamma$, which is a maximal graph in $\mH_v(2, 2, 2, 3; 4; 22)$, we obtain two more graphs in $\mH_v(2, 2, 2, 3; 4; 22)$.

\begin{figure}[h]
	\centering
	\begin{subfigure}[b]{\textwidth}
		\centering
		\includegraphics[trim={0 0 0 495},clip,height=121px,width=121px]{./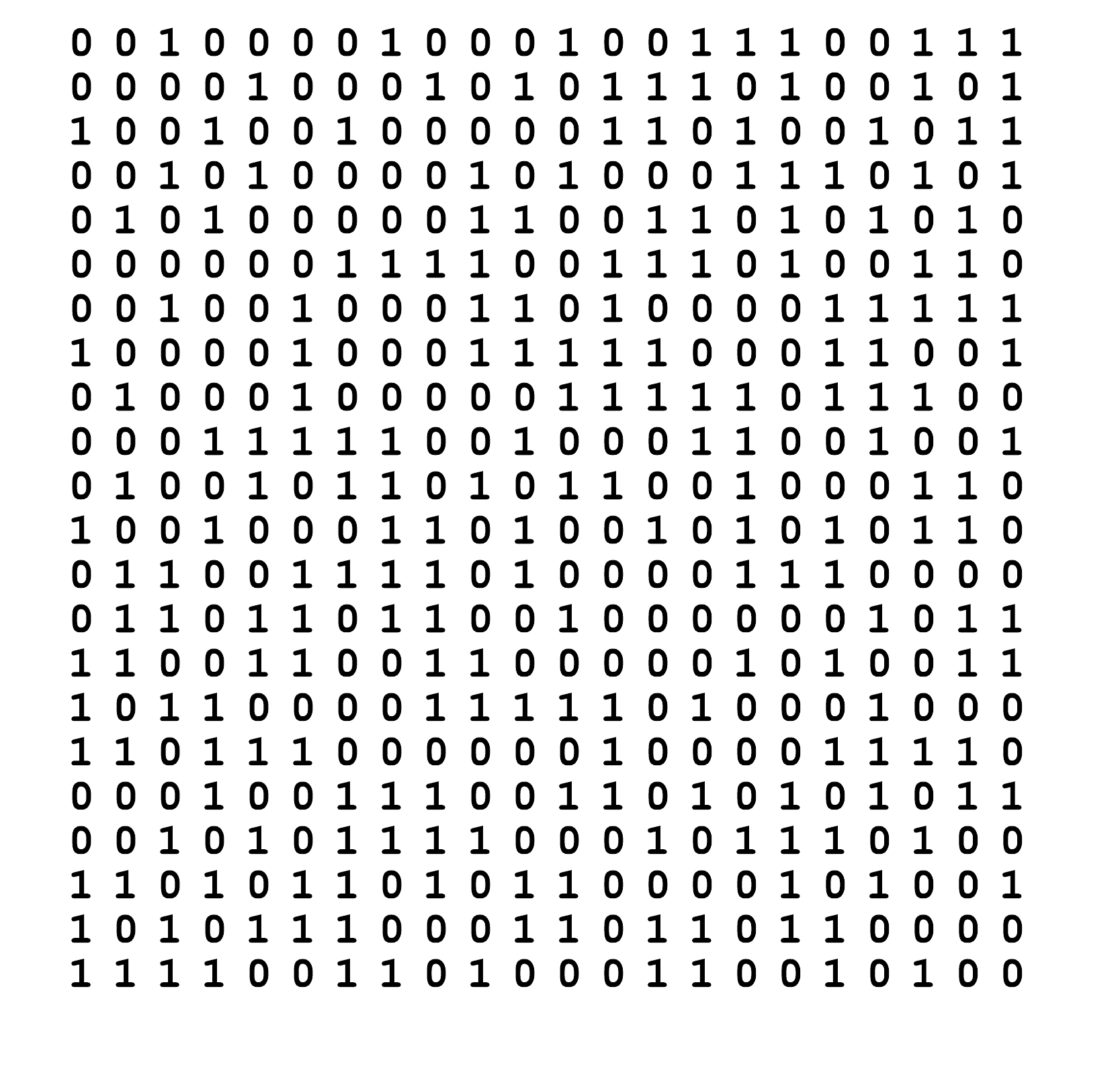}
		\label{figure: G_1}
	\end{subfigure}%
	\vspace{-1.5em}
	\caption{22-vertex graph $\Gamma \in \mH_v(2, 2, 2, 3; 4)$}
	\label{figure: H(2, 2, 2, 3; 4; 22)}
\end{figure}

\begin{procedure}
\label{procedure: populate}
\cite{BN15a}
Extending a set of maximal graphs in $\mH_v(a_1, ..., a_s; q; n)$.

1. Let $\mathcal{A}$ be a set of maximal graphs in $\mH_v(a_1, ..., a_s; q; n)$.

2. By removing edges from the graphs in $\mathcal{A}$, find all their subgraphs which are in $\mH_v(a_1, ..., a_s; q; n)$. This way a set of non-maximal graphs in $\mH_v(a_1, ..., a_s; q; n)$ is obtained.

3. Add edges to the non-maximal graphs to find all their supergraphs which are maximal in $\mH_v(a_1, ..., a_s; q; n)$. Extend the set $\mathcal{A}$ by adding the new maximal graphs.
\end{procedure}


\section*{Acknowledgements}

The author would like to thank prof. Nedyalko Nenov, who read the manuscript and helped to improve the paper.



Aleksandar Bikov

asbikov@fmi.uni-sofia.bg

\smallskip

Faculty of Mathematics and Informatics

Sofia University "St. Kliment Ohridski

5, James Bourchier Blvd.

1164 Sofia, Bulgaria

\vspace{4em}

\begin{center}
\textbf{{\MakeUppercase{Нови граници за върховото число на Фолкман} $F_v(2, 2, 2, 3; 4)$}}\\
\bigskip
\textbf{Александър Биков}\\
\bigskip
\end{center}

За граф $G$ символът $G \overset{v}{\rightarrow} (a_1, ..., a_s)$ означава, че при всяко оцветяване на върховете на $G$ в $s$ цвята съществува $i \in \set{1, ..., s}$, такова че има едноцветна $a_i$-клика от $i$-я цвят. Върховото число на Фолкман $F_v(a_1, ..., a_s; q)$ се дефинира с равенството
$$F_v(a_1, ..., a_s; q) = \min\{\vert\V(G)\vert : G \overset{v}{\rightarrow} (a_1, ..., a_s) \mbox{ и } K_q \not\subseteq G\}.$$
В тази работа ние подобряваме известните граници за числото $F_v(2, 2, 2, 3; 4)$ като доказваме с помощта на компютър, че $20 \leq F_v(2, 2, 2, 3; 4) \leq 22$.


\end{document}